\documentclass[preprint,12pt]{elsarticle}

\usepackage[utf8]{inputenc}
\usepackage[english]{babel}

\usepackage{verbatim} 
\usepackage[normalem]{ulem} 

\usepackage{fullpage}
\usepackage{setspace}
\usepackage{amsthm}   
\usepackage{amsmath}  
\usepackage{amssymb}  
\usepackage{graphicx}
\usepackage{datetime}
\usepackage{enumitem} 
\usepackage{mathtools}
\usepackage{array}
\usepackage{bm}
\usepackage{caption}
\usepackage{subcaption}
\usepackage{empheq}
\usepackage[mathscr]{eucal}
\usepackage{xspace} 

\newtheorem{theorem}              {Theorem}
\newtheorem{lemma}      [theorem] {Lemma}

\newtheorem{corollary}  [theorem] {Corollary}

\newtheorem{remark}{Remark}
\newtheorem{definition} [theorem] {Definition}

\def\alabel{\upshape({\itshape \alph*\,})}

\usepackage{tikz}  
\usetikzlibrary{shapes}
\usetikzlibrary{arrows}
\usetikzlibrary{calc}
\usetikzlibrary{decorations.markings}
\usetikzlibrary{arrows.meta}
\usetikzlibrary{shapes.misc}   
\usetikzlibrary{positioning}
\usetikzlibrary{fit}
\usetikzlibrary{external} 
\tikzset{non code/.style={circle, draw=black}}
\tikzset{code/.style={circle, fill=black}}
\tikzset{vtx/.style={circle, draw=black, thick}}

\usepackage{dsfont} 
\usepackage[babel]{microtype}

\usepackage{xcolor} 
\usepackage{hyperref}

\usepackage[T1]{fontenc}

\makeatletter
\def\moverlay{\mathpalette\mov@rlay}
\def\mov@rlay#1#2{\leavevmode\vtop{   \baselineskip\z@skip \lineskiplimit-\maxdimen
   \ialign{\hfil$\m@th#1##$\hfil\cr#2\crcr}}}
\newcommand{\charfusion}[3][\mathord]{
    #1{\ifx#1\mathop\vphantom{#2}\fi
        \mathpalette\mov@rlay{#2\cr#3}
      }
    \ifx#1\mathop\expandafter\displaylimits\fi}
\makeatother

\newcommand{\quot}[2]{\mathchoice%
{\left.\raisebox{.1em}{$\displaystyle{#1}$}\kern-1pt/\raisebox{-.2em}{$\displaystyle{#2}$}\right.}
{\left.\raisebox{.1em}{${#1}$}\kern-1pt/\raisebox{-.2em}{${#2}$}\right.}
{\left.\raisebox{.1em}{$\scriptstyle{#1}$}\kern-1pt/\raisebox{-.2em}{$\scriptstyle{#2}$}\right.}
{\left.\raisebox{.1em}{$\scriptscriptstyle{#1}$}\kern-1pt/\raisebox{-.2em}{$\scriptscriptstyle{#2}$}\right.}
}

\DeclareFontFamily{U}  {MnSymbolC}{}
\DeclareSymbolFont{MnSyC}         {U}  {MnSymbolC}{m}{n}
\DeclareFontShape{U}{MnSymbolC}{m}{n}{
    <-6>  MnSymbolC5
   <6-7>  MnSymbolC6
   <7-8>  MnSymbolC7
   <8-9>  MnSymbolC8
   <9-10> MnSymbolC9
  <10-12> MnSymbolC10
  <12->   MnSymbolC12}{}
\DeclareMathSymbol{\powerset}{\mathord}{MnSyC}{180}

\let\epsilon\varepsilon
\let\smallhat\hat
\let\hat\widehat

\allowdisplaybreaks[2]

\newcommand{\lds}{\textup{\textsc{lds}}\xspace} 
\DeclareMathOperator{\dist}{dist}  
\newcommand{\NP}{$\mathrm{NP}$} 
\newcommand{\MinDen}{\textup{\textsc{Min-density}}}
\newcommand{\MinCard}{\textup{\textsc{Min-cardinality}}}

\begin{document}


\begin{frontmatter}
  




\title{The complexity of minimum-density locating-dominating set  in\\ infinite
  periodic graphs\tnoteref{1}}

\tnotetext[1]{Research partially supported by CNPq (Proc.~311892/2021-3) and FAPESP (Proc.~2023/03167-5)}
\author[a]{Arthur C. Gomes} 
\ead{arthurcgomes@ime.usp.br}

\author[a]{Yoshiko Wakabayashi\corref{cor1}}
\cortext[cor1]{Corresponding author}
\ead{yw@ime.usp.br}

\affiliation[a]{organization={University of S\~ao Paulo, Institute of Mathematics, Statistics and
    Computer Science},
             addressline={Rua do Mat\~ao, 1010 - Cidade Universit\'aria}, 
             city={S\~ao Paulo},
             postcode={05508-090}, 
             state={SP},
             country={Brazil}}

\begin{abstract}
  A dominating set $S$ of a graph $G$ is a \emph{locating-dominating
    set} (\lds) if, for each pair of distinct vertices not in~$S$,
  their neighbourhoods in $S$ are distinct. Finding a
  minimum-cardinality \lds  in finite graphs is a well-known NP-hard
  problem.  On infinite graphs, this problem naturally generalises to
  finding an \lds of minimum density. While density bounds have been
  widely studied for specific infinite regular grids, no computational
  complexity results exist for infinite graphs. We prove that the
  minimum-density \lds problem in infinite $\mathbb{Z}$-periodic
  graphs with a finite period is NP-hard.  This result bridges the gap
  between cardinality minimization on finite graphs and density
  minimization on infinite graphs via a rigorous periodic reduction.
  Furthermore, our approach can be adapted to establish NP-hardness
  for related structural problems on infinite periodic graphs.
\end{abstract}

\begin{keyword}
locating-dominating set \sep   infinite graph \sep  periodic graph
\sep density \sep  NP-hardness
\MSC[2020] 68Q17 \sep 0569 \sep 0563
\end{keyword}

\end{frontmatter}


\section{Introduction}
\label{sec:introduction}

Let $G = (V, E)$ be a connected graph, and let $\text{dist}(u, v)$
denote the distance between two vertices~$u, v \in V$. We say that
vertices at distance~1 are \emph{neighbours}. The \emph{(open)
  neighbourhood} of a vertex $v \in V$, denoted $N(v)$, is defined as
$N(v) \coloneqq \{u \in V : \dist(v, u) = 1\}$, while the \emph{closed
  neighbourhood} of~$v$, denoted $N[v]$, is defined as
$N[v] \coloneqq N(v) \cup \{v\}$.  A \emph{dominating set} of $G$ is a
set~$S \subseteq V$ such that each vertex $u \in V \setminus S$ has a
neighbour in $S$. A dominating set $S$ of $G$ is a
\emph{locating-dominating set} (\lds) if $S$ has the additional
\emph{locating property}: each vertex $v \in V \setminus S$ can be
uniquely distinguished by its neighbourhood in $S$.  Formally, this
means that for each pair of distinct vertices
$u, v \in V \setminus S$, we must have $N(u) \cap S \neq N(v) \cap S$.
Locating-dominating set were introduced by
Slater~\cite{Slater75,Slater02} in the early seventies; they are
used to model and optimize diagnostic systems where sensor
placement must minimize costs while retaining precise tracking
capabilities.

The studies of this concept in finite graphs have focused primarily in
the problem of finding an \lds of minimum cardinality. We refer to
such a set as a \emph{minimum~\lds} and denote its cardinality in a
graph $G$ as $\gamma^{LD}(G)$. Finding a minimum \lds is NP-hard for
arbitrary graphs, but it is polynomially solvable when restricted to
graph classes with specific structural properties (such as trees,
series-parallel networks, outerplanar graphs and others).  It remains
NP-hard even when restricted to specific graph classes such as
bipartite graphs~\cite{CharonHL03}, interval
graphs~\cite{FoucaudMNPV16}, and subcubic planar bipartite
graphs~\cite{Foucaud15}. Furthermore, the problem is $\log$-APX-hard
for general finite graphs and remains so for several graph
classes~\cite{Foucaud15,Suomela07}.  On infinite graphs, the objective
generalises to finding an \lds of minimum density, and research has
focused on regular grids, such as square, triangular, king, and
hexagonal grids~\cite{Slater02,HonkalaL06,Honkala06}.  More recently,
infinite grids with a finite number of rows (known as strips of finite width) 
have also been investigated~\cite{Junnila15,BouznifDMP19,GomesW25Procedia,GomesW26a}.

Before defining the density of a set in a graph, let us
first establish some notation.

Let $G=(V,E)$ be a graph and let $r \geq 1$ be a natural number. For a
vertex $v$ in $G$, we define the 
\emph{$r$-open neighbourhood} of $v$ in $G$ as the set
$N_r(v) \coloneqq \{w \in V(G) : 0 < \dist(v, w) \leq r\}$.

The \emph{density} of a set $S \subseteq V$, denoted $d(S, G)$, is defined as
\begin{align*}
    d(S, G) \coloneqq \inf \{d_w(S, G) : w \in V\}, \quad \text{where} \quad 
    d_w(S, G) \coloneqq \limsup_{r\to\infty} \frac{|N_r(w) \cap S|}{|N_r(w)|}.
\end{align*}

The \emph{minimum density of an \lds of a graph $G$}, denoted $d^*(G)$, is defined as
\[
 d^*(G) \coloneqq \inf \{d(S, G) : S \text{ is an \lds of } G\}.
\]



\section{Preliminaries for the reduction} 
\label{sec:he}

The following two problems will be central to our studies: the
\MinCard{} \lds and the \MinDen{} \lds.  The first one seeks an \lds
of minimum cardinality in a finite graph, and the second one seeks an
\lds of minimum density in an infinite graph.

While the first one is known to be
NP-hard~\cite{CharonHL03,Foucaud15}, to the best of our knowledge, the
computational complexity of the \MinDen{} \lds problem in infinite
graphs or infinite strips (of a finite width) has not been studied in
the literature, although many results on this topic have been obtained
for regular infinite grids.

In the next section, we show a polynomial-time reduction from the \MinCard{} 
\lds problem to the \MinDen{} \lds problem in infinite $\mathbb{Z}$-periodic 
graphs with a finite period. 
More formally, we reduce the first problem on a finite graph~$G$ to the second 
problem on an infinite graph $G^{\infty}$ whose automorphism group contains a 
subgroup isomorphic to~$\mathbb{Z}$ with a finite quotient.
This concept will become clear when we define the infinite graph used in the reduction.
To present the reduction, we need some preliminary definitions and
results, which we give in this section.

Given an arbitrary finite graph $G$, we construct first a finite graph
$H = H(G)$, defined below, which contains $G$ as a subgraph.  This
graph $H(G)$ is used later in Section~\ref{sec:G-infinite} in the
construction of the infinite periodic graph $G^{\infty}$ mentioned
above.



  \begin{definition}
    [$\mathbf{G}$-\textbf{gadget}]\label{def:gadget}
  Let $G$ be a finite connected graph with $n\geq 5$ vertices, and
  let~$z$ be an arbitrary vertex in~$G$.  Let $B$ be a graph called
  \textbf{Butterfly}, with vertex set 
  $V(B) =\{a, \smallhat{a}, b, \smallhat{b}, c, \smallhat{c}, d,
  \smallhat{d}, r, s, t, u, v, w, x \}$, and edge set
  $E(B) = \{ a\smallhat{a}, as, sb, b\smallhat{b}, bt, tx, xr, ra,
  c\smallhat{c}, cv, vd,\break d\smallhat{d}, dw, wx, xu, uc\}$.  Let $P$ be
  a path defined by the sequence of vertices
  $(v_1,v_2,\ldots, v_{5n})$. Consider that the graphs $G$, $B$ and $P$ are all
  pairwise vertex-disjoint.

  Let $H = H(G)$ be the graph, called \textbf{G-gadget}, whose vertex
  set is $V(H) \coloneqq V(G) \cup V(B) \cup \{y\} \cup V(P)$, where $y\notin V(G)$,
  and edge set is $E(H) \coloneqq E(G) \cup E(B) \cup \{xy, yz, dv_1\} \cup E(P)$. 
  Note that $|V(H)|= 6n + 16$,  $|V(G)|=n$,  $|V(B)|=15$ and $|V(P)|=5n$.
\end{definition}

In Figure~\ref{fig:gadget} we exhibit the $G$-gadget $H(G)$, where $G$
is the $5$-vertex graph within the dashed cycle, with a chosen vertex
$z$ (as indicated).

\begin{remark}
  For a finite graph $G$, when we refer to the $G$-gadget $H(G)$, we
  are assuming that an arbitrary vertex $z$ in $G$ has been chosen.
  Thus, throughout this text, for a same graph~$G$, all
  gadgets $H(G)$ are isomorphic.
  \end{remark}
  \vspace{-3mm}
\begin{figure}[ht] 
    \centering
    \includegraphics{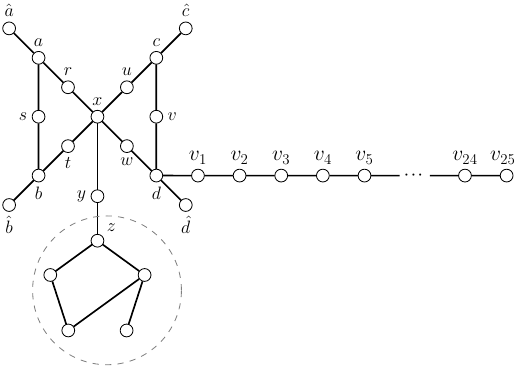}
    \caption{The $G$-gadget $H(G)$ for the graph $G$ indicated in 
    the dashed cycle.}
    \label{fig:gadget}
\end{figure}


Before using $H(G)$ in the construction of an infinite graph
$G^{\infty}$, we prove a lemma that will be helpful to understand how
a minimum \lds of $H(G)$ is related to a minimum \lds of $G$.

\begin{lemma}    
    \label{lemma:lds-butterfly}
    Let $G$ be an $n$-vertex connected graph and  $H=H(G)$ be the
    $G$-gadget as in Definition~\ref{def:gadget}. Let $\hat{H}$ be the
    graph obtained from $H$ after removing the graph $G$ and the
    vertex~$y$. Let $S$ be a minimum \lds of $H$ and
    $\hat{S} \coloneqq S \cap V(\hat{H})$. Then, the intersection of
    $\hat{S}$ with the Butterfly is precisely the set
    $\{a, b, c, d, x\}$.  Moreover, $\hat{S}$ is a minimum \lds of
    $\hat{H}$, and has size~$2n+5$.
\end{lemma}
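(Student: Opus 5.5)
The plan is to prove a lower bound $|S \cap V(\hat H)| \ge 2n+5$ that holds for \emph{every} \lds $S$ of $H$. I will then show that equality forces $S \cap V(B) = \{a,b,c,d,x\}$, and finally exhibit a matching construction to conclude minimality in $\hat H$. I first record the local structure. The vertices $\smallhat a,\smallhat b,\smallhat c,\smallhat d$ are pendant, attached to $a,b,c,d$ respectively. The vertex $x$ is adjacent to $r,t,u,w$ and to $y$, and it is the only vertex of $\hat H$ with a neighbour outside $\hat H$. The vertex $d$ is the only vertex of $B$ adjacent to the path, via $dv_1$.

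\emph{Lower bound.} The closed neighbourhoods of each pendant pair $\{a,\smallhat a\},\dots,\{d,\smallhat d\}$ are disjoint from one another. Since $S$ dominates each $\smallhat{\ }$-vertex, $S$ contains at least four vertices from these pairs.

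Next I argue that $S$ needs at least one further vertex of $\{r,s,t,u,v,w,x\}$. Suppose none of these vertices is in $S$. Then $r$ and $s$ can only be dominated by $a$ and $b$, so $r$ has $S$-neighbourhood $\{a\}$ (it needs $a\in S$). Hence $N(\smallhat a)\cap S=\{a\}$ coincides with $N(r)\cap S$ unless $\smallhat a\in S$. Pushing this through, each pendant pair forces two vertices or an extra vertex nearby, and we reach a contradiction with having only four vertices.

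For the path, I consider the path $d,v_1,\dots,v_{5n}$. The restriction of $S$ to it must locate and dominate $v_1,\dots,v_{5n}$, where only $d$ can help from outside. A standard discharging argument on paths, where each vertex outside $S$ needs a distinct nonempty trace and each $S$-vertex covers at most two path neighbours, gives $|S\cap V(P)| \ge 2n$ if $d\in S$. If $d\notin S$, it gives $|S\cap V(P)|\ge 2n$ plus a correspondingly stronger requirement on $B$. This is the classical $\lceil 2m/5\rceil$ bound for paths. Summing, $|\hat S| \ge 2n+5$.

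\emph{Equality forces $\{a,b,c,d,x\}$.} This is the step I expect to be the main obstacle, since it requires a careful finite case analysis on $B$. Assume $|S\cap V(B)|=5$ and $|S\cap V(P)|=2n$. The structure gives exactly one vertex from each pendant pair plus one vertex $q\in\{r,s,t,u,v,w,x\}$.

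First I show $\smallhat a\notin S$. Otherwise $a\notin S$ needs $a$'s trace $\{\smallhat a\}$, or $\{\smallhat a\}$ together with a neighbour among $s,r$. The vertices $s$ and $r$ must then be dominated through $b$ or $q$ and pairwise distinguished, which a single $q$ cannot achieve on both wings. The symmetric arguments, using the $a\leftrightarrow b$ and $c\leftrightarrow d$ symmetries and the swap of the two wings, give $a,b,c,d\in S$.

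Then the four vertices $r,t,u,w$ have traces $\{a\},\{b\},\{c\},\{d\}$ before adding $q$. These collide with the traces of the pendant vertices $\smallhat a,\smallhat b,\smallhat c,\smallhat d$. A single extra vertex must separate all four collisions, so it must be adjacent to $r,t,u,w$, and $x$ is the only such vertex. A little extra care is needed at $d$: the path count $2n$ must be tight only when $d\in S$ dominates $v_1$, and this case is handled together with the path bound.

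\emph{Minimality in $\hat H$.} I verify directly that $\{a,b,c,d,x\}\cup L$ is an \lds of $\hat H$, where $L$ is a $2n$-element \lds of the path $P$ chosen compatibly with $d\in S$. The traces are $r:\{a,x\}$, $s:\{a,b\}$, $t:\{b,x\}$, $u:\{c,x\}$, $v:\{c,d\}$, $w:\{d,x\}$, and the leaves have the singletons $\{a\},\{b\},\{c\},\{d\}$. These are all distinct, and the traces on the path are distinct as well.

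Hence $\gamma^{LD}(\hat H)\le 2n+5$. The lower bound above applies equally to any \lds of $\hat H$, since removing $y$ and $G$ only removes constraints and never removes helpers inside $\hat H$ apart from $y$, which the argument never used. So $\gamma^{LD}(\hat H)=2n+5$.

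It remains to show that $\hat S$ itself is an \lds of $\hat H$. Since $x\in \hat S$, the only vertex whose trace could change by deleting $y$ is $x$ itself, which lies in $\hat S$. Therefore $\hat S$ is an \lds of $\hat H$ of size exactly $2n+5$.
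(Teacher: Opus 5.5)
Your lower bound (one vertex per pendant pair, a fifth Butterfly vertex unless all eight pendant-pair vertices are taken, and $|S\cap V(P)|\ge 2n$ via trace counting) and your equality analysis are essentially sound and follow the paper's route. The gap is the step that connects them. Nothing in your proposal shows that a \emph{minimum} \lds $S$ of $H$ is in the equality case. You simply ``assume $|S\cap V(B)|=5$ and $|S\cap V(P)|=2n$'', but your lower bound only gives $\ge$. Your matching construction $\{a,b,c,d,x\}\cup L$ is checked to be an \lds of $\hat H$, not of $H$. Knowing $\gamma^{LD}(\hat H)=2n+5$ does not bound $|\hat S|$ when $S$ is optimal in $H$. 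A priori, $\hat S$ need not even be an \lds of $\hat H$: if $x\notin S$, $y\in S$ and $r,t,u,w\notin S$, deleting $y$ leaves $x$ undominated. So your conclusions $S\cap V(B)=\{a,b,c,d,x\}$ and $x\in\hat S$, and your final claim that $\hat S$ is an \lds of $\hat H$ ``of size exactly $2n+5$'', are unsupported as written.

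The missing step is the exchange argument the paper uses: $S'\coloneqq(S\setminus\hat S)\cup S^*$ is an \lds of $H$. The only edge between $\hat H$ and $V(G)\cup\{y\}$ is $xy$, and $x\in S^*$. Hence vertices of $\hat H$ keep their $S^*$-traces and vertices of $G$ keep their $S$-traces. If $y\notin S$, then $y$ is dominated with trace $\{x\}$ or $\{x,z\}$. This trace contains $x$, so it differs from every trace of a vertex of $G$. It is also not the trace of any vertex of $\hat H$, since the non-$S^*$ neighbours of $x$ have traces $\{a,x\},\{b,x\},\{c,x\},\{d,x\}$. Minimality of $S$ then gives $|\hat S|\le 2n+5$, and only at this point does your lower bound force equality.

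Two smaller points. First, your sketch of $\smallhat a\notin S$ is muddled: $r$ cannot be dominated through $b$, and only one wing is involved. Cleanly, $r$ forces $q\in\{r,x\}$, while $s$ forces $q=s$: either $s$ is otherwise undominated, or $b\in S$ and $s$ collides with $\smallhat b$ on $\{b\}$. Second, the ``extra care at $d$'' remark is unnecessary. The argument forcing $d\in S$ never uses the trace of $d$, so the edge $dv_1$ plays no role there.
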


\begin{proof}   
    Let $S$ be a minimum {\lds} of $H$, and let
    $\hat{S} \coloneqq S \cap V(\hat{H})$.  Since the vertices $\smallhat{a}$,
    $\smallhat{b}$, $\smallhat{c}$, $\smallhat{d}$ have degree one in
    $H$, it follows that at least one vertex in each of the~4 disjoint
    sets $\{a, \smallhat{a}\}$, $\{b, \smallhat{b}\}$,
    $\{c, \smallhat{c}\}$, $\{d, \smallhat{d}\}$ must be in $\hat{S}$. 
    A result of Bertrand et al.~\cite{BertrandCHL04} says that any \lds{}
    of a path must contain at least two vertices out of each group of
    5~consecutive vertices. Thus, $\hat{S}$ must contain at least $2n$
    vertices of the path $P=(v_1, v_2,\ldots, v_{5n})$, regardless of
    $d$ being in $\hat{S}$ or not.

    Let $S^* \coloneqq \{a, b, c, d, x\} \cup \big\{ v_{5j+2}, v_{5j+4} : j = 0, \ldots,
    n-1 \big\}$.  Clearly,  $S^*$ is an \lds{} of $\hat{H}$ of size $2n + 5$ (see
    Figure~\ref{fig:lds-H-hat}). Thus,  $\gamma^{LD}(\hat{H}) \leq 2n +
    5$. Let us prove that $\gamma^{LD}(\hat{H}) = 2n + 5$.

    We claim that $\hat{S}$ restricted to the Butterfly is precisely
    the set $\{a, b, c, d, x\}$. (Note that the Butterfly is symmetric
    with respect to the vertex $x$.) Suppose $a \notin \hat{S}$. Then
    $\smallhat{a} \in \hat{S}$. (i) Suppose $b \notin \hat{S}$. Then
    $\smallhat{b}\in \hat{S}$. Since the two neighbours of $s$ are not
    in~$\hat{S}$, we must have $s\in \hat{S}$. In this case, $\hat{S}$
    will contain more than~5 vertices of the Butterfly; and therefore
    $|\hat{S}| > 2n + 5$, a contradiction (because
    $(S\setminus \hat{S}) \cup S^*$ would be an {\lds} of $H$ smaller
    than $S$).  (ii) Thus, $b\in \hat{S}$. In this case,
    $N(\smallhat{b}) \cap \hat{S} = N(s) \cap \hat{S} = \{b\}$, and
    therefore at least one among~$\smallhat{b}$ and~$s$ must also be
    in~$\hat{S}$, implying that $\hat{S}$ will contain more than~5
    vertices of the Butterfly, again a contradiction.  Therefore,
    $a \in \hat{S}$.  In this case, if $b \notin \hat{S}$ then
    $N(\smallhat{a}) \cap \hat{S} = N(s) \cap \hat{S} =\{a\}$,
    implying that either $\smallhat{a}$ or $s$ must belong to
    $\hat{S}$. Moreover, since $\smallhat{b}\in \hat{S}$, we conclude
    that $\hat{S}$ contains more than~5 vertices of the Butterfly, a
    contradiction. By symmetry, we conclude that $c\in \hat{S}$ and
    $d\in \hat{S}$. Thus, $\hat{S} \supset \{a,b,c,d\}$.

    If $x \notin \hat{S}$, then at least one vertex of each of the
    four pairs $\{\smallhat{a}, r\}$, $\{\smallhat{b}, t\}$,
    $\{\smallhat{c}, u\}$ and $\{\smallhat{d}, w\}$ must be in
    $\hat{S}$, because~$S$ has the locating property. But then,
    $\hat{S}$ will have at least $8$ vertices of the Butterfly, and
    therefore $\hat{S}$ will have size at least $2n+8$, a contradiction
    to the minimality of~$S$. Hence, $\hat{S} \supset
    \{a,b,c,d,x\}$. Since $\{a,b,c,d,x\}$ is an {\lds} of the Butterfly,
    it is the unique choice in the Butterfly that together with other
    $2n$ vertices in the path $P$ defines a minimum {\lds} of
    $\hat{H}$. This concludes the proof that $\hat{S}$ is a minimum
    {\lds} of $\hat{H}$ and $|\hat{S}| = 2n+5$.
  \end{proof}

  \vspace{-3mm} 
\begin{figure}[ht]   
    \centering
    \includegraphics{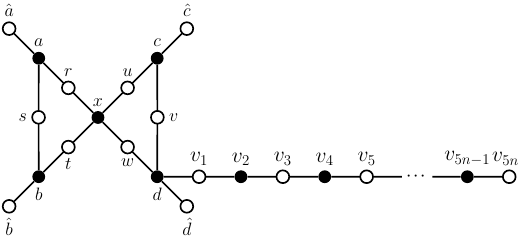}
    \caption{An \lds{} of size $2n + 5$ of $\hat{H}$, where $G$ has
      $n$ vertices.} 
    \label{fig:lds-H-hat}
\end{figure}

In Lemma~\ref{lemma:lds-butterfly}, we showed that every minimum \lds of $H$ 
contains the vertex $x$. 
It remains to analyse the role of the vertex $y$ in a minimum \lds of 
$H = \hat{H} \cup G \cup \{xy, yz\}$.

\begin{lemma}   
  \label{lemma:join-graphs}
  Let $G$ be an $n$-vertex connected graph and $H = H(G)$ be the
  $G$-gadget as in Definition~\ref{def:gadget}.  Let $S$ be a minimum
  \lds of $H$. Then $|S| = 2n + 5 + \gamma^{LD}(G)$.  Moreover, the
  following holds:
    \begin{enumerate}[noitemsep,topsep=0.5mm,label=\alabel]
        \item If $y \notin S$, then $S \cap V(G)$ is a minimum {\lds} of $G$.
        \item If $y \in S$, then $Z \coloneqq (S \setminus \{y\}) \cup \{z\}$ 
            is also a minimum {\lds}  of $H$, and $Z \cap V(G)$ is a minimum 
            {\lds} of $G$. 
    \end{enumerate}
\end{lemma}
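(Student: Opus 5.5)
We prove matching upper and lower bounds on $|S|$ and then read off (a) and (b) from the resulting equalities.

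\textbf{Upper bound.} Let $D$ be a minimum \lds of $G$ and $S^*$ the \lds of $\hat{H}$ from the proof of Lemma~\ref{lemma:lds-butterfly}. We check that $S^*\cup D$ is an \lds of $H$. The vertex $y$ is dominated by $x\in S^*$. Its trace is $N(y)\cap(S^*\cup D)$, which equals $\{x\}$ or $\{x,z\}$; no other vertex outside the set has $x$ in its trace except $r,t,u,w$. These have traces $\{a,x\}$, $\{b,x\}$, $\{c,x\}$, $\{d,x\}$ respectively, which are distinct from those of $y$. Vertices of $G$ outside $D$ have traces in $D$ alone, because $y\notin S^*\cup D$. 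They are therefore distinguished among themselves by the property of $D$, and from vertices of $\hat{H}$, whose traces lie in $S^*$ and are nonempty. Hence $|S|\le 2n+5+\gamma^{LD}(G)$.

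\textbf{Lower bound.} Write $S=\hat{S}\cup S_G\cup S_y$ with $\hat{S}=S\cap V(\hat{H})$, $S_G=S\cap V(G)$ and $S_y=S\cap\{y\}$. By Lemma~\ref{lemma:lds-butterfly}, $|\hat{S}|=2n+5$ and $x\in\hat{S}$. It therefore suffices to show $|S_G|+|S_y|\ge\gamma^{LD}(G)$, with the appropriate structure in each case.

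\emph{Case (a), $y\notin S$.} The only edge between $G$ and the rest of $H$ is $yz$. Hence for $v\in V(G)\setminus S$ we have $N(v)\cap S=N_G(v)\cap S_G$, so $S_G$ dominates and locates $V(G)\setminus S_G$ within $G$. Thus $S_G$ is an \lds of $G$, giving $|S_G|\ge\gamma^{LD}(G)$. Equality of the bounds then forces $S_G$ to be a minimum \lds of $G$, and also yields $|S|=2n+5+\gamma^{LD}(G)$.

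\emph{Case (b), $y\in S$.} Consider $Z=(S\setminus\{y\})\cup\{z\}$. If $z\in S$ already, then $|Z|<|S|$. Once we show $Z$ is an \lds, this contradicts minimality, so in fact $z\notin S$ and $|Z|=|S|$. We must verify that $Z$ is an \lds of $H$.
\begin{itemize}
\item The vertex $y$ has trace containing $x$ and $z$, which differs from every other trace: $r,t,u,w$ do not see $z$, and vertices of $G$ do not see $x$.
\item Vertices of $\hat{H}$ are unaffected, since they were not adjacent to $y$ except $x$, which lies in $S$.
\item For $v\in V(G)\setminus Z$, the trace $N(v)\cap Z$ equals $N_G(v)\cap Z\cap V(G)$. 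Under $S$, the only vertex of $G$ that saw $y$ was $z$, which is now in $Z$. So traces of vertices in $V(G)\setminus Z$ are unchanged, except that we gain $z$ in some traces. Distinct vertices that had distinct traces under $S$ either still differ, or differed only at $y$; the latter is impossible because only $z$ was adjacent to $y$, and $z$ is now in $Z$. Domination is preserved because every $v\in V(G)\setminus Z$ was dominated under $S$ by a vertex other than $y$.
\end{itemize}
Hence $Z$ is a minimum \lds of $H$ with $y\notin Z$. Applying case (a) to $Z$ shows that $Z\cap V(G)$ is a minimum \lds of $G$.

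The main obstacle is the careful verification in case (b) that trading $y$ for $z$ preserves the locating property, in particular that $y$ itself stays distinguished. This relies on $x\in S$, which Lemma~\ref{lemma:lds-butterfly} guarantees.
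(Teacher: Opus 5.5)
Your proof is correct and follows essentially the same route as the paper: split on whether $y\in S$, restrict $S$ to $G$ in case (a), and in case (b) trade $y$ for $z$ (ruling out $z\in S$ by minimality) and then reduce to case (a). The differences are only in presentation: you state the upper bound explicitly via $S^*\cup D$, which the paper uses implicitly when it replaces $S\cap V(G)$ by a smaller \lds of $G$, and you check the locating property of $Z$ (including that $y$ stays distinguished thanks to $x\in S$) more carefully than the paper does.
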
  

\begin{figure}[ht]  
    \centering
    \begin{minipage}[b]{0.5\linewidth}
        \centering
        \scalebox{0.82}{\includegraphics{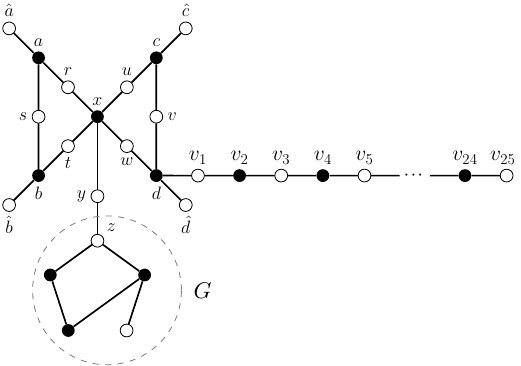}}
    \end{minipage}
    \begin{minipage}[b]{0.5\linewidth}
        \centering
        \scalebox{0.82}{\includegraphics{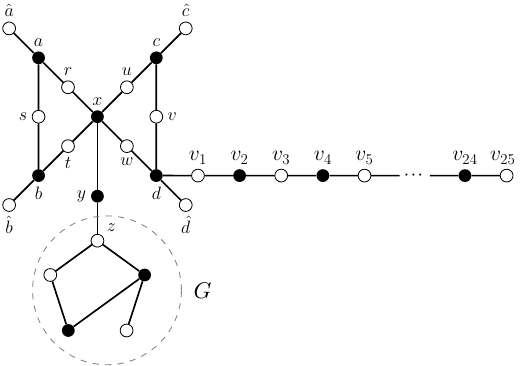}}
    \end{minipage}
    \caption{Optimal locating-dominating sets for $H(G)$: (a) not containing $y$;  (b)
      containing $y$.}
    \label{fig:two-solutions}
  \end{figure}
\begin{proof}
    Let $S$ be a minimum {\lds} of $H$ and let $\hat{S} \coloneqq S\cap
    V(\hat{H})$.

    \smallskip
    \noindent \textbf{Case (a)} When  $y \notin S$.
    By Lemma~\ref{lemma:lds-butterfly}, $\hat{S}$ is a minimum {\lds} of
    $\hat{H}$ and has size $2n+5$.
    Let $S'\coloneqq S\cap V(G)$. Since $y \notin S$, it is immediate
    that $S'$ is an {\lds} of $G$ (as it inherits this property from $S$
    with respect to $H$).  Moreover, $S'$ is a minimum {\lds} of $G$,
    otherwise, we could take a smaller {\lds} of $G$, which together
    with $\hat{S}$ would yield an {\lds} of~$H$ smaller than~$S$, a
    contradiction.  Thus, $|S'| = \gamma^{LD}(G)$, and
    $|S| = 2n + 5 + \gamma^{LD}(G)$.

   \medskip

    \noindent\textbf{Case (b)} When  $y \in S$.
    Let $S'\coloneqq S \cap V(G)$.  We claim that $z \notin
    S$. Indeed, suppose $z\in S$.  Since $S$ is a minimum {\lds}
    of~$H$, by Lemma~\ref{lemma:lds-butterfly} we know that $S$ contains
    $x$. Thus, $S\setminus\{y\}$ is also an {\lds} of $H$.  But this
    contradicts the minimality of~$S$.  Thus, for now we have that $y$
    is in $S$ and $z$ is not in $S$. Let
    $Z \coloneqq (S \setminus \{y\}) \cup \{z\}$. 
   
    We claim that $Z$ is an \lds{} of $H$. For that, it suffices to
    show that $S'\cup \{z\}$ is an {\lds} of $G$.  Clearly,
    $S' \cup \{z\}$ is a dominating set of $G$.  To verify that
    $S' \cup \{z\}$ has the locating property in~$G$, note that every
    pair of vertices $u, v \notin S'$, which are in $G - z$, have
    distinct neighbourhoods in $S'$ (a property inherited from
    $S$). Thus, such pairs $(u, v)$ continue to have distinct
    neighbourhoods in $S' \cup \{z\}$. As $S' \cup \{z\}$ is an {\lds}
    of $G$, it follows that $Z$ is an \lds{} of $H$.  Since
    $|Z| = |S|$, we have that $Z$ is a minimum \lds{} of $H$. Since
    $|Z \cap V(\hat{H})| = |S \cap V(\hat{H})| = 2n+5$ and
    $y\notin Z$, by the previous case, we conclude that $Z\cap V(G)$
    is a minimum {\lds} of $G$ and $|S|= |Z| = 2n+5 + \gamma^{LD}(G)$.
\end{proof}


\section{The infinite periodic graph $\mathbf{G^\infty}$}
\label{sec:G-infinite}

\begin{definition} [\textbf{The graph}  $\mathbf{G^{\infty}}$] \label{def:inf_graph}
    Let $G$ be a finite connected graph with $n\geq 5$ vertices, and let $z$ be 
    an arbitrary vertex of $G$. 
    The infinite graph $G^{\infty}$ consists of infinitely many copies $H_i$,
    $i \in \mathbb{Z}$, of the $G_i$-gadget $H(G_i)$, where $G_i$ is isomorphic to $G$.      
    The vertices of each $H_i$ are labelled precisely as in $H(G)$, but with an 
    additional superscript~$i$.  For each $i\in \mathbb{Z}$, the subgraph  $H_i$ is connected 
    to $H_{i+1}$ by adding the edge $\{v^i_{5n}, b^{i+1}\}$. 
    See Figure~\ref{fig:infinite_graph}.
\end{definition}

\begin{figure}[ht]  
    \centering
    \includegraphics{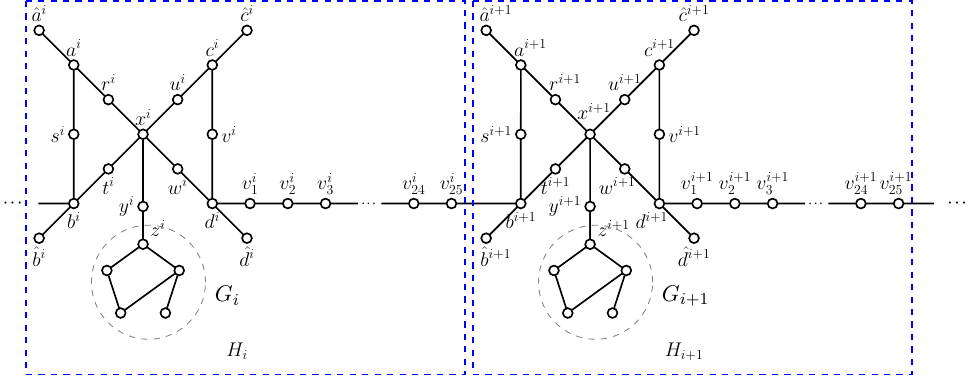}
    \caption{The infinite graph $G^{\infty}$ obtained by
    chaining infinite copies of $H(G)$.}
    \label{fig:infinite_graph}
\end{figure}

\begin{lemma}  
\label{lemma:upper-bound}
    For any finite connected graph $G$ of order $n\geq 5$, the infinite graph 
    $G^{\infty}$ constructed as in Definition~\ref{def:inf_graph} has an \lds{} 
    $S$ such that $|S\cap V(H_i)| = 2n + 5 + \gamma^{LD}(G)$ for every $i \in \mathbb{Z}$.
\end{lemma}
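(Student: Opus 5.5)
The plan is to take a fixed minimum \lds of a single gadget and replicate it periodically. Concretely, I will fix a minimum \lds $L$ of $G$ with $|L| = \gamma^{LD}(G)$. I then define, for each $i \in \mathbb{Z}$, the set
\[
S_i \coloneqq \{a^i, b^i, c^i, d^i, x^i\} \cup \{v^i_{5j+2}, v^i_{5j+4} : j = 0, \ldots, n-1\} \cup L^i,
\]
where $L^i$ is the copy of $L$ in $G_i$. I set $S \coloneqq \bigcup_{i\in\mathbb{Z}} S_i$. Thus $S_i = S^{*,i} \cup L^i$, where $S^*$ is the set from the proof of Lemma~\ref{lemma:lds-butterfly}. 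Note that $y^i \notin S$. Then $|S \cap V(H_i)| = 2n+5+\gamma^{LD}(G)$ holds by construction, and everything reduces to checking that $S$ is an \lds of $G^{\infty}$.

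Since $y^i\notin S$, the set $S_i$ is an \lds of $H_i$. This follows as in case (a) of Lemma~\ref{lemma:join-graphs}, or directly: $S^{*,i}$ is an \lds of $\hat{H}_i$, and $L^i$ is an \lds of $G_i$. Moreover, $y^i$ is dominated by $x^i$, and $N(y^i)\cap S_i = \{x^i\}$ or $\{x^i, z^i\}$. This set differs from every other trace: in $\hat{H}_i$ the only vertices adjacent to $x^i$ are $r,t,u,w$, each of which also sees $a$, $b$, $c$ or $d$. Vertices of $G_i$ do not see $x^i$ at all.

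Next I account for the only new edges, $v^i_{5n}b^{i+1}$. Both $b^{i+1}$ and $v^i_{5n-1}$ (index $5(n-1)+4$) lie in $S$, while $v^i_{5n}$ does not. Hence domination is unaffected, and the sole change in traces is that $v^i_{5n}$ now has $N(v^i_{5n})\cap S = \{v^i_{5n-1}, b^{i+1}\}$, whereas $b^{i+1}$ is itself in $S$. Traces of vertices in different copies $H_i$, $H_k$ with $k\ne i$ are automatically distinct, except possibly for $v^i_{5n}$, because every other non-$S$ vertex has a nonempty trace contained in a single copy. The trace of $v^i_{5n}$ meets two copies, so it is unique.

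It remains to check that within $H_i$ no vertex other than $v^i_{5n}$ had trace $\{v^i_{5n-1}\}$, and that enlarging this trace creates no collision. The only other neighbour of $v^i_{5n-1}$ is $v^i_{5n-2}$, whose trace is $\{v^i_{5n-1}\}$ in $H_i$; it is unchanged in $G^\infty$ and distinct from the new trace of $v^i_{5n}$. The one genuinely delicate point, and the step I expect to be the main obstacle, is verifying that $S^*$ is an \lds of $\hat H$. In particular, the edge $dv_1$ must not cause $v_1$ and a Butterfly vertex to share a trace: $v_1$ has trace $\{d, v_2\}$, $\smallhat d$ has trace $\{d\}$, and $v_3$ has trace $\{v_2, v_4\}$. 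This is a short finite check that the paper already asserts via Figure~\ref{fig:lds-H-hat}, so I would cite it.
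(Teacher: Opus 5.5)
Your proposal is correct and follows essentially the same route as the paper: you take the same periodic set $S_i = S^{*,i}\cup L^i$ with $y^i\notin S$, check that $S_i$ is an \lds of $H_i$, and then analyse the only inter-copy edges $v^i_{5n}b^{i+1}$. Your observation that the trace of $v^i_{5n}$ meets two copies is a slightly cleaner phrasing of the paper's boundary argument.

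There is one small slip, which is harmless because your final paragraph is redundant once that two-copy argument is in place. The trace of $v^i_{5n-2}$ is $\{v^i_{5n-3}, v^i_{5n-1}\}$, since $5n-3 = 5(n-1)+2$, and not $\{v^i_{5n-1}\}$. If it were $\{v^i_{5n-1}\}$, it would already collide with the trace of $v^i_{5n}$ inside $H_i$.
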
 

\begin{proof}
    Let $\tilde{S_i}$ be a minimum \lds of $G_i$. In this case, 
   by Lemmas~\ref{lemma:lds-butterfly} and~\ref{lemma:join-graphs}, the set
    \[
        S_i = \{a^i, b^i, c^i, d^i, x^i\} \cup \big\{ v^i_{5j + 2}, v^i_{5j + 4} : 
        j = 0, \ldots, n - 1\big\} \cup \tilde{S_i},
    \]
    is a minimum \lds of $H_i$, and hence $|S_i| = 2n + 5 + \gamma^{LD}(G)$.

    Now let $S = \bigcup_{i\in \mathbb{Z}}S_i$.  Then
    $|S\cap V(H_i)| = 2n+ 5 + \gamma^{LD}(G)$ for every
    $i\in \mathbb{Z}$.  It remains to prove that $S$ is indeed an \lds
    of $G^{\infty}$.  Note that each~$S_i$ is an \lds within its
    corresponding gadget~$H_i$.  We observe that, by
    Lemma~\ref{lemma:lds-butterfly}, $S_i$ intersects the Butterfly,
    say $B_i$ (of $H_i$), precisely in the set
    $\{a^i, b^i, c^i, d^i, x^i\}$, so the vertex $y^i$ is dominated by
    $x^i$ and is distinguished by $S_i$ from the other neighbours of
    $x^i$ in $B_i$.

    Let us analyse  what happens at the boundary of $H_i$ and $H_{i+1}$
    (the boundary of $H_{i-1}$ and $H_{i}$ is equivalent).  Since the
    only edge joining $H_i$ and $H_{i+1}$ is $v^i_{5n}b^{i+1}$, and
    $b^{i+1}\in S$, we only need to check pairs of vertices adjacent
    to $b^{i+1}$ which are in different gadgets. This means that it
    suffices to check pairs in which one of the vertices is
    $v^i_{5n}$. But since $v^i_{5n}$ is neighbour to $v^i_{5n-1}$, and
    $v^i_{5n-1}\in S_i$, all pairs containing $v^i_{5n}$ are
    distinguished by $S$. Thus, $S$ has the locating property.
    As $S$ is a  dominating set, we conclude that $S$ is an \lds
    of $G^{\infty}$.

    We observe that in the definition of $S_i$ we could have chosen
    differently the $2n$ vertices in the path $P^i$, but for sure
    either $v_{5n-1}^i$ or $v_{5n}^i$ has to be chosen. When 
    $v_{5n}^i$ is chosen, the conclusion that $S$ has the
    locating property is immediate. 
\end{proof}

We are now interested in the minimum density of an \lds of
$G^{\infty}$. To do this, we prove first that $G^{\infty}$ is a
well-behaved graph, that is,  $G^{\infty}$  has the Slow Growth property. 

\begin{definition}
  An infinite graph $G$ has the Slow Growth property {\rm
    (SG}-property{\rm )}
if $G$ is connected and has a vertex $s$ such that
 $ \lim_{r \to \infty} {|N_{r+1}[s]|}/{|N_{r}[s]|} = 1$.
\end{definition}

\begin{theorem}
    For any finite connected graph $G$ of order $n\geq 5$, the infinite graph 
    $G^{\infty}$ constructed as in Definition~\ref{def:inf_graph}
    has the {\rm SG}-property.
\end{theorem}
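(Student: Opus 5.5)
The plan is to show that balls in $G^{\infty}$ grow linearly in $r$, which immediately forces the ratio of consecutive ball sizes to tend to $1$. Here $N_r[s]$ denotes the closed $r$-ball $N_r(s)\cup\{s\}$. Connectivity is immediate: each gadget $H_i$ is connected, since $G_i$ is connected, the Butterfly is connected, $y^i$ joins $x^i$ to $z^i$, and $P^i$ hangs from $d^i$. Consecutive gadgets are joined by the edge $v^i_{5n}b^{i+1}$, so every vertex is reachable from every other. Fix $s \coloneqq x^0$, and write $m \coloneqq |V(H(G))| = 6n+16$ and $D$ for the maximum distance, in $G^\infty$, between a vertex of $H_i$ and $b^i$ or $v^i_{5n}$. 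By periodicity, $D$ is independent of $i$, and $D \le m$.

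\textbf{Lower bound on ball growth.} Since the only edges between $H_i$ and $H_{i+1}$ are $v^i_{5n}b^{i+1}$, every path from $H_0$ to $H_k$ with $k>0$ passes through $b^1, v^1_{5n}, b^2, \dots, b^k$. Hence the distance from $s$ to $b^{k}$ is at most $c_0 + (k-1)L$, where $L \coloneqq \dist(b^i, v^i_{5n}) + 1$ is a constant independent of $i$; the symmetric statement holds for $k<0$. It follows that $N_r[s]$ contains all of $H_{-k},\dots,H_k$ once $r \ge c_0 + kL + D$. Consequently $|N_r[s]| \ge m\,(2\lfloor (r - C)/L\rfloor + 1)$ for some constant $C$, which is at least $\alpha r$ for a fixed $\alpha>0$ and all large $r$.

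\textbf{Upper bound on shell size.} Conversely, any vertex of $H_k$ is at distance at least $(|k|-1)L'$ from $s$ for some constant $L'\ge 1$, because crossing each full gadget costs at least one edge. Hence every vertex at distance exactly $r+1$ from $s$ lies in a gadget $H_k$ whose index $|k|$ lies in an interval of bounded length around $r/L$. To see this, use the cut structure: the distance from $s$ to any vertex of $H_k$ (with $k>0$) equals $\dist(s,b^k)$ plus a bounded quantity at most $D$, and $\dist(s,b^k) = \dist(s,b^1) + (k-1)L$ exactly. The reason the latter is exact is that any path from $b^1$ to $b^k$ must traverse each intermediate gadget from $b^j$ to $v^j_{5n}$. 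So vertices at distance exactly $r+1$ lie in at most $2(\lceil D/L\rceil+1)$ gadgets, giving $|N_{r+1}[s]\setminus N_r[s]| \le K$ for a constant $K$ (say $K = 2m(D/L+2)$).

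Combining the two bounds gives $1 \le |N_{r+1}[s]|/|N_r[s]| \le 1 + K/(\alpha r) \to 1$. The main obstacle is the shell-size bound, namely making precise that distances to $H_k$ grow exactly linearly in $k$ up to a bounded additive error. I would handle this via the cut-edge argument above: every edge $v^i_{5n}b^{i+1}$ is a bridge, so shortest paths are forced through the chain $b^1,v^1_{5n},b^2,\dots$.
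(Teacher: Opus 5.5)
Your proof is correct and follows the same route as the paper. Both arguments show that each shell $N_{r+1}[s]\setminus N_r[s]$ has size bounded by a constant while $|N_r[s]|\to\infty$, giving $1\le |N_{r+1}[s]|/|N_r[s]|\le 1+K/|N_r[s]|\to 1$. Your bridge argument (each edge $v^i_{5n}b^{i+1}$ is a cut edge, so $\dist(s,b^k)$ is exactly additive in $k$) makes rigorous the bounded-growth claim that the paper only asserts via a breadth-first search tree with constant $2n+6$, and you also prove explicitly that the balls are unbounded, which the paper leaves implicit.
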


\begin{proof}
    Take $s \coloneqq x^i$ for some $i \in \mathbb{Z}$.
    Since $N_r[s] \subset N_{r+1}[s]$ for every $r \geq 0$, we have 
    that $|N_{r+1}[s]|/|N_r[s]| \geq 1$. 
    Moreover, analysing a breadth-first search tree of $G^{\infty}$ rooted at $s$, 
    it is not hard to see that the largest growth of this tree from a layer at
    distance~$r$ from~$s$ to the next layer $r+1$ may be of order at
    most $2n + 6$, that is, $|N_{r+1}[s]| \leq |N_{r}[s]| + C$, where
    $C = 2n+6$. Therefore, $1 \leq |N_{r+1}[s]|/ |N_r[s]| \leq 1 +
    C/|N_r[s]|$, and hence, \hbox{$\lim_{r \to \infty} {|N_{r+1}[s]|}/{|N_r[s]|} = 1$.}
  \end{proof}

This property was introduced by Sampaio et al.~\cite{SampaioSW24}. One
of the consequences of this property (there are others) is that we may
use the following lemma.

\begin{lemma}[Sampaio et al.~\cite{SampaioSW24}] \label{lem:SG-property} 
    Let $\hat{G}$ be an infinite connected graph with bounded maximum
    degree that satisfies the {\rm SG}-property. Let $\ell, c, c', \delta$ be
    positive integers, and let $C$ be a subset of
    $V(\hat{G})$. If $\hat{G}$ can be
    partitioned into finite sets $V_1,V_2, \ldots$  of size $\ell$ such that
    $c\leq |V_i\cap C|\leq c'$ for $i\in \mathbb{Z}$, and the distance between
    any two vertices of $V_i$ is at most $\delta$, then the density of $C$
    in $\hat{G}$  satisfies $c/\ell \leq  d(C,\hat{G}) \leq c'/\ell$.
\end{lemma}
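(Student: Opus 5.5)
We would prove the two inequalities $d(C,\hat{G}) \ge c/\ell$ and $d(C,\hat{G}) \le c'/\ell$ by the same mechanism: count, inside a large ball, how many whole parts $V_i$ it contains and how many parts it cuts, and show that the cut parts are negligible. Fix an arbitrary vertex $w$, since the density is an infimum over $w$ of $d_w(C,\hat{G})$. For a radius $r$, let $I_r$ be the set of indices $i$ with $V_i \subseteq N_r[w]$, and let $J_r$ be the set of indices with $V_i \cap N_r[w] \neq \emptyset$. Since each part has diameter at most $\delta$, every part meeting $N_{r-\delta}[w]$ is entirely contained in $N_r[w]$. Hence
\[
N_{r-\delta}[w] \subseteq \bigcup_{i\in I_r} V_i \subseteq N_r[w] \subseteq \bigcup_{i\in J_r} V_i \subseteq N_{r+\delta}[w].
\]
Counting vertices of $C$ part by part then gives
\[
c\,|I_r| \le |N_r[w]\cap C| \le c'\,|J_r|,
\]
together with the size bounds $\ell |I_r| \ge |N_{r-\delta}[w]|$ and $\ell|J_r| \le |N_{r+\delta}[w]|$.

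We next relate $|N_{r\pm\delta}[w]|$ to $|N_r[w]|$. The SG-property is stated for a particular vertex $s$, so the first step is to transfer it to $w$. Since $N_{r-k}[s]\subseteq N_r[w]\subseteq N_{r+k}[s]$ with $k=\dist(s,w)$, we get
\[
\frac{|N_{r+1}[w]|}{|N_r[w]|} \le \frac{|N_{r+1+k}[s]|}{|N_{r-k}[s]|},
\]
which is a product of $2k+1$ consecutive ratios, each tending to $1$. So the ratio at $w$ also tends to $1$. Iterating $\delta$ times gives $|N_{r\pm\delta}[w]|/|N_r[w]|\to 1$. The same applies to the open neighbourhood $N_r(w)$ used in the definition of $d_w$, since it differs from $N_r[w]$ by a single vertex and $|N_r[w]|\to\infty$ in an infinite connected graph.

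Combining the above, we obtain
\[
\frac{c}{\ell}\cdot\frac{|N_{r-\delta}[w]|}{|N_r[w]|} \;\le\; \frac{|N_r[w]\cap C|}{|N_r[w]|} \;\le\; \frac{c'}{\ell}\cdot\frac{|N_{r+\delta}[w]|}{|N_r[w]|}.
\]
Passing to the limsup as $r\to\infty$, and absorbing the bounded difference of at most one vertex between closed and open neighbourhoods, gives $c/\ell\le d_w(C,\hat{G})\le c'/\ell$ for every $w$. Taking the infimum over $w$ yields the claim.

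The main obstacle is the transfer of the SG-property from the distinguished vertex $s$ to an arbitrary $w$, together with controlling boundary effects. Bounded degree guarantees that balls are finite, which makes all the quantities above well defined. The step that actually needs care is the telescoping of finitely many ratios, each of which tends to $1$.
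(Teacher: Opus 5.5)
Your proof is correct and complete. There is no in-paper proof to compare it with: the paper does not prove this lemma but imports it from Sampaio et al. Your sandwich argument therefore gives a self-contained justification of a result the paper only cites.

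Each step checks out:
\begin{itemize}
\item The chain $N_{r-\delta}[w]\subseteq\bigcup_{i\in I_r}V_i\subseteq N_r[w]\subseteq\bigcup_{i\in J_r}V_i\subseteq N_{r+\delta}[w]$ follows from the diameter bound $\delta$. Because the parts are disjoint, the two unions contain exactly $\ell|I_r|$ and $\ell|J_r|$ vertices.
\item Transferring the SG-property from $s$ to an arbitrary $w$ is necessary, since $d(C,\hat{G})$ is an infimum over all vertices while the SG-property is assumed at a single vertex. Your transfer is sound: you use $N_{r-k}[s]\subseteq N_r[w]\subseteq N_{r+k}[s]$ and bound the ratio at $w$ by a product of a fixed number $2k+1$ of consecutive ratios, each tending to $1$.
\item The difference between the open-ball and closed-ball ratios is at most $1/(|N_r[w]|-1)$, which tends to $0$.
\end{itemize}

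Your argument also shows two things the statement hides. First, bounded maximum degree is used only to make balls finite. Second, you obtain $c/\ell\le\liminf_r |N_r(w)\cap C|/|N_r(w)|$ and $\limsup_r |N_r(w)\cap C|/|N_r(w)|\le c'/\ell$ at every vertex $w$. This is stronger than the stated bound on the infimum $d(C,\hat{G})$.
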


Note that, as $G$ is a finite graph of order $n\geq 5$, then 
 $G^{\infty}$ has maximum degree at most~$n$, and the
diameter of each $H_i$ is at most~$6n+4$. Thus, applying
Lemma~\ref{lem:SG-property} to $G^{\infty}$ we have:

\begin{lemma} \label{cor:Ginfinito} Let $G$ be a finite connected
  graph with $n\geq 5$ vertices, and let $G^{\infty}$ be the infinite
  connected graph mentioned in Definition~\ref{def:inf_graph}.  Let
  $c, c', \ell$ be positive integers.  Consider the natural partition
  of $V(G^{\infty})$ into finite parts $V(H_i)$ for $i\in \mathbb{Z}$,
  where $\ell = |V(H_i)|= 6n+16$, and each $H_i$ is a periodic
  subgraph of $G^{\infty}$.  Let $S$ be an \lds{} of
  $G^{\infty}$ such that $c\leq |V(H_i)\cap S|\leq c'$ for
  $i\in \mathbb{Z}$.  Then $c/\ell\leq d(S,G^{\infty})\leq c'/\ell$.
\end{lemma}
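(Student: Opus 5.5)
The plan is to obtain this statement as a direct instance of Lemma~\ref{lem:SG-property}, taking $\hat{G} \coloneqq G^{\infty}$, $C \coloneqq S$, and $V_i \coloneqq V(H_i)$ for $i\in\mathbb{Z}$, with $\ell = 6n+16$. The work is to check each hypothesis of that lemma in turn.

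First, $G^{\infty}$ is infinite and connected. Each $H_i$ is connected, because $G$, the Butterfly and the path $P$ are connected and are joined through $xy$, $yz$ and $dv_1$. Consecutive gadgets are joined by the edge $v^i_{5n}b^{i+1}$. Second, the maximum degree is bounded. A vertex of $G_i$ has degree at most $(n-1)+1 = n$ in $G^{\infty}$, since only $z$ gains the extra neighbour $y$. Every other vertex has degree at most $5$: for instance $x$ has neighbours $t,r,w,u,y$, and $b^{i+1}$ has neighbours $\smallhat{b},s,t,v^i_{5n}$. Because $n\ge 5$, the bound $n$ suffices. Third, $G^{\infty}$ has the SG-property by the preceding theorem.

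Fourth, the sets $V(H_i)$ partition $V(G^{\infty})$ into finite parts of common size $\ell=6n+16$, by Definitions~\ref{def:gadget} and~\ref{def:inf_graph}. Fifth, the bounds $c\le |V(H_i)\cap S|\le c'$ are exactly the hypothesis on $S$. Sixth, we need a uniform distance bound $\delta$. Distances in $G^{\infty}$ are at most distances inside $H_i$, so it suffices to bound the diameter of $H_i$. We take $\delta = 6n+4$, or crudely $\delta = |V(H_i)|-1$, since a connected graph on $\ell$ vertices has diameter at most $\ell-1$; this crude bound avoids any computation. With all hypotheses in place, Lemma~\ref{lem:SG-property} gives $c/\ell \le d(S,G^{\infty}) \le c'/\ell$.

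The main obstacle is the indexing. Lemma~\ref{lem:SG-property} is stated with parts written $V_1,V_2,\ldots$ but with the condition imposed for $i\in\mathbb{Z}$. I would resolve this by fixing any bijection between $\mathbb{N}$ and $\mathbb{Z}$ to enumerate the gadgets, and noting that the lemma only uses the partition property, not any particular order. I would also remark that the diameter bound may be taken crude, since Lemma~\ref{lem:SG-property} only needs some finite $\delta$.
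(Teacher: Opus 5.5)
Your proposal is correct and follows essentially the same route as the paper, which obtains this lemma directly from Lemma~\ref{lem:SG-property} using the SG-property theorem, the maximum-degree bound $n$, and the diameter bound $6n+4$ for each $H_i$. Your explicit degree check, the fallback diameter bound $\ell-1$, and the remark on re-indexing the parts by $\mathbb{N}$ are harmless refinements of that same argument.
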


\begin{lemma}
    \label{lemma:min-density}
    If $G$ is a finite connected graph of order $n\geq 5$, then the
    minimum density of an \lds in  $G^{\infty}$ is $d^*(G^{\infty})   = (2n + 5 + \gamma^{LD}(G))/(6n + 16)$. 
\end{lemma}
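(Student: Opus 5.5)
The proof splits into an upper bound and a lower bound. For the upper bound, I will take the \lds $S$ from Lemma~\ref{lemma:upper-bound}, which meets every $H_i$ in exactly $2n+5+\gamma^{LD}(G)$ vertices. Applying Lemma~\ref{cor:Ginfinito} with $c=c'=2n+5+\gamma^{LD}(G)$ and $\ell=6n+16$ gives $d(S,G^{\infty}) = (2n+5+\gamma^{LD}(G))/(6n+16)$. Hence $d^*(G^{\infty})$ is at most this value.

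For the lower bound, I will show that every \lds $S$ of $G^{\infty}$ has density at least $(2n+5+\gamma^{LD}(G))/(6n+16)$. The difficulty is that $S\cap V(H_i)$ need not be an \lds of $H_i$. The only edge leaving $H_i$ on the left is $v^{i-1}_{5n}b^i$, and the only one on the right is $v^i_{5n}b^{i+1}$. So the vertices $b^i$ and $v^i_{5n}$ may be dominated, or distinguished, by vertices outside $H_i$. My plan is a local repair. From $S$, I will build an \lds $S_i'$ of $H_i$ with $|S_i'| \le |S\cap V(H_i)| + 1$, or better with an amortised cost that vanishes. For this I would charge each boundary edge to only one side, adding at most one extra vertex (say $b^i$ or $v^i_{5n}$) per boundary. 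Each boundary is shared by two consecutive gadgets, so adding a vertex at every boundary is harmless on average.

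I do not want to lose even an additive constant per gadget, since $\gamma^{LD}$ is an integer and the bound must be exact. So I will argue through averaging over long windows rather than gadget by gadget. Take $k$ consecutive gadgets $H_1,\dots,H_k$ and the union $W$ of their vertex sets. The set $S\cap W$, plus at most two boundary vertices (for instance $b^1$ and $v^k_{5n}$), is an \lds of the finite graph induced by $W$. This finite graph is a chain of $k$ gadgets joined by single edges. Its minimum \lds has size at least $k(2n+5+\gamma^{LD}(G)) - O(1)$. This follows from applying Lemmas~\ref{lemma:lds-butterfly} and~\ref{lemma:join-graphs} inside each interior gadget. The internal connecting edges join $v^{j}_{5n}$ to $b^{j+1}$, and an optimal solution must contain $b^{j+1}$ anyway. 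So these edges only help the locating property at the path end, and that is compensated by the path bound of Bertrand et al.~\cite{BertrandCHL04}, which holds for every block of $5$ consecutive path vertices. This gives $|S\cap W| \ge k(2n+5+\gamma^{LD}(G)) - O(1)$. Feeding this into the density computation through balls $N_r(w)$, which under the SG-property essentially consist of whole consecutive gadgets plus boundary error, then gives the lower bound as $r\to\infty$.

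The main obstacle is the per-gadget lower bound in the presence of neighbours from adjacent gadgets. I must show that in each gadget $H_j$, the path $P^j$ still needs $2n$ vertices of $S$, the Butterfly still needs $5$, and $G_j\cup\{y^j\}$ still needs $\gamma^{LD}(G)$. The last two depend only on vertices inside $H_j$, since only $b^j$ touches outside. I would redo the Butterfly case analysis of Lemma~\ref{lemma:lds-butterfly}, allowing $b^j$ to be dominated by $v^{j-1}_{5n}$. If that breaks the count, I would fall back to the window argument above, where a constant loss at each boundary is tolerated. That fallback would itself fail if the loss occurred at every internal boundary rather than only at the two ends of the window, so the case analysis must show that internal boundaries cost nothing.
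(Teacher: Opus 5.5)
Your upper bound is the paper's. The lower bound, however, is never proved. The claim you call the main obstacle is that every gadget still contains $2n+5+\gamma^{LD}(G)$ vertices of $S$ despite the edges $v^{j}_{5n}b^{j+1}$. You leave it open, and both of your fallbacks depend on it.

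\begin{itemize}
\item The one-vertex-per-boundary repair is not harmless on average. It costs one vertex per gadget, i.e.\ $1/(6n+16)$ in density, as you then notice yourself.
\item The window argument is circular. Lemmas~\ref{lemma:lds-butterfly} and~\ref{lemma:join-graphs} are about minimum locating-dominating sets of an \emph{isolated} gadget. The trace of $S$ on an interior gadget of a chain is not an \lds of that gadget, so those lemmas cannot be ``applied inside each interior gadget''.
\item The assertion that an optimal solution ``must contain $b^{j+1}$ anyway'' is exactly the unproved local claim.
\item Both the Butterfly (through $b^j$) and the path (through $v^j_{5n}$) touch neighbouring gadgets. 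So ``only $b^j$ touches outside'' is not accurate.
\end{itemize}

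The missing idea is that all three counts are local. They hold for \emph{every} \lds $S$ of $G^{\infty}$ and every $j$, because each uses only traces of vertices whose whole neighbourhood lies inside the piece being counted.

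\emph{Butterfly.} The pendants force one vertex of $S$ from each of $\{a,\smallhat{a}\},\{b,\smallhat{b}\},\{c,\smallhat{c}\},\{d,\smallhat{d}\}$. Suppose only four Butterfly vertices were in $S$. Then $x,r,s,t\notin S$. If $a\in S$, then $r$ and $\smallhat{a}$ both have trace $\{a\}$, so $a\notin S$. Symmetrically, using $t$ and $\smallhat{b}$, $b\notin S$. But then $s$ is undominated. So the Butterfly holds at least $5$ vertices of $S$. The outside neighbour of $b^j$ never matters, because the trace of $b^j$ itself is never used.

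\emph{Path.} In each block $v_{5m+1},\dots,v_{5m+5}$ the three middle vertices have both neighbours inside the block. Hence each block holds at least two vertices of $S$, regardless of $d^j$ and $b^{j+1}$, giving at least $2n$ on the path.

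\emph{The piece $V(G_j)\cup\{y^j\}$.} The only edge leaving $G_j$ is $z^jy^j$. So $S\cap V(G_j)$, together with $z^j$ when $y^j\in S$, is an \lds of $G_j$. Adding $z^j$ cannot merge two distinct traces, since $z^j$ lay in neither. This gives $|S\cap(V(G_j)\cup\{y^j\})|\ge\gamma^{LD}(G)$.

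Summing over these disjoint pieces gives $|S\cap V(H_j)|\ge 2n+5+\gamma^{LD}(G)$ for every $j$. Lemma~\ref{cor:Ginfinito} with $c'=\ell$ then yields $d(S,G^{\infty})\ge(2n+5+\gamma^{LD}(G))/(6n+16)$ for every \lds $S$, hence for $d^*(G^{\infty})$. No repair, windows or boundary loss are needed.

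The paper reaches the same per-gadget bound differently. It picks a gadget minimising $|S\cap V(H_i)|$ and reruns the arguments of Lemmas~\ref{lemma:lds-butterfly} and~\ref{lemma:join-graphs} there. The direct local count is simpler. It also does not need a minimum-density \lds to exist.
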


\begin{proof}
    Let $\dot{S}$ be an \lds{} of $G^{\infty}$ as defined in the proof
    of Lemma~\ref{lemma:upper-bound}. Consider the partition of
    $V(G^{\infty})$ into finite parts $V(H_i)$ for $i \in \mathbb{Z}$ of
    order $\ell = |V(H_i)| = 6n + 16$, where each $H_i$ is a periodic
    subgraph of $V(G^{\infty})$. 
    Then, since $|V(H_i) \cap \dot{S}| =  2n + 5 + \gamma^{LD}(G)$,
    by Lemma~\ref{cor:Ginfinito}, we have that $d(\dot{S},G^{\infty})= 
     (2n + 5 + \gamma^{LD}(G))/(6n + 16)$.
    This gives us an upper bound for $d^*(G^{\infty})$, that is,
    \begin{equation}
        d^*(G^{\infty}) \leq (2n + 5 + \gamma^{LD}(G))/(6n + 16). \label{eq:1}      
    \end{equation}

    Now, let $S$ be a minimum-density \lds of $G^{\infty}$ and let
    $H_i$ be a subgraph of $G^{\infty}$ such that $|S \cap V(H_i)|$ is
    the smallest possible among all gadgets. Let
    $S^i \coloneqq S \cap V(H_i)$. We shall prove that
    $|S^i|\geq 2n + 5 + \gamma^{LD}(G)$.

    Since $H_i$ has the same local structure as the gadget of
    Definition~\ref{def:gadget}, and because of the choice of~$S^i$,
    by the same argument used in the proof of
    Lemma~\ref{lemma:lds-butterfly}, accounting for the additional
    edges $b^{i+1}v_{5n}^i$ and $b^iv_{5n}^{i-1}$, which do not affect
    the conclusion, the intersection of $S^i$ with the Butterfly of
    $H_i$ is exactly the set $\{a^i, b^i, c^i, d^i, x^i\}$.


    Now, according to~\cite{BertrandCHL04}, in each
    set of $5$ consecutive vertices of a path, we must have at least
    two in any \lds{}.  However, in $H_i$ the path
    $P^i = (v^i_1, v^i_2, \ldots, v^i_{5n})$ has its initial vertex
    connected to $d^i$ (which is in $S^i$) and its end vertex
    connected to $b^{i+1}$. A careful analysis shows that
    $v_2^i\in S^ i$, and although there is not a unique choice for
    $S^i \cap V(P^i)$, we conclude that this intersection contains at
    least $2n$ vertices (including either $v_{5n-1}^i$ or $v_{5n}^i$).
    Thus, $|S^i\setminus V(G_i + z^iy^i)| \geq 2n + 5$.  Now we
    analyse two cases. 
    
    \textbf{Case~(a)} If $S^i$ does not contain $y^i$, then
    $S^i \cap V(G_i)$ is a minimum {\lds} of~$G_i$ and therefore,
    $|S^i| \geq 2n+ 5 + \gamma^{LD}(G)$.  \textbf{Case~(b)} If $S^i$
    contains $y^i$, since $S^i$ contains $x^i$, then $z^i \notin S^i$,
    otherwise we could delete $y^i$ from $S^i$, a contradiction to the
    choice of $H_i$.  Let $Z = (S^i \setminus \{y^i\}) \cup \{z^i\}$.
    Then $Z \cap V(G_i)$ is a minimum {\lds} of $G_i$ (if there
    existed a smaller {\lds}, it would contradict the choice of
    $H_i$). Then
    $|Z \cap V(G_i)| = |S^i \cap V(G_i)| + 1 = \gamma^{LD}(G_i)$, and
    hence, $|S^i\cap V(G_i))| = \gamma^{LD}(G_i) - 1$. Thus, 
     $|S^i| \geq 2n + 5 + |\{y\}| + \gamma^{LD}(G_i) - 1 = 2n + 5 +
    \gamma^{LD}(G)$.  Therefore,  in both cases we have that
    $|S^i| \geq 2n + 5 + \gamma^{LD}(G)$, and, from
    Lemma~\ref{cor:Ginfinito}, we get that
    $d^*(G^{\infty}) = d(S,G^{\infty}) \geq (2n + 5 +
    \gamma^{LD}(G))/(6n + 16)$.
   
    This lower bound combined  with the inequality~\eqref{eq:1} gives 
    the  desired equality. 
  \end{proof}

\section{Reducing the {\MinCard} \lds to {\MinDen} \lds in an infinite graph}
\begin{theorem}\label{theo:NP-hardness}
    The {\MinDen} \lds problem is {\NP}-hard in 
    infinite $\mathbb{Z}$-periodic graphs with a finite period.
\end{theorem}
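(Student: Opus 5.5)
We reduce from the decision version of \MinCard{} \lds{}, which is NP-hard~\cite{CharonHL03,Foucaud15}: given a finite graph $G$ and an integer $k$, decide whether $\gamma^{LD}(G)\le k$. The decision version of \MinDen{} \lds{} asks, for an infinite $\mathbb{Z}$-periodic graph given by a finite description of one period and a rational $q$, whether $d^*(G^\infty)\le q$. The reduction sends $(G,k)$ to $(G^\infty,q)$ with
\[
q \coloneqq \frac{2n+5+k}{6n+16}.
\]

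\textbf{Normalising the input.} First we make the input satisfy the hypotheses of Definition~\ref{def:inf_graph}, namely that $G$ is connected with $n\ge 5$. If $n<5$, we solve the instance by brute force in constant time and output a fixed yes- or no-instance. For connectivity, we use that minimum \lds{} is additive over components, so we may restrict to connected inputs. Alternatively, we cite the known NP-hardness for connected graphs, since the hard classes in~\cite{Foucaud15} are connected.

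\textbf{Polynomiality of the construction.} The graph $G^\infty$ is described finitely by one gadget $H(G)$, which has $6n+16$ vertices by Definition~\ref{def:gadget}, together with the single connecting edge $v_{5n}^i b^{i+1}$. The shift $H_i\mapsto H_{i+1}$ is an automorphism generating a copy of $\mathbb{Z}$, and its quotient is finite. This periodic description has size polynomial in $|G|$ and is built in polynomial time, and $q$ is computable in polynomial time.

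\textbf{Correctness.} By Lemma~\ref{lemma:min-density}, $d^*(G^\infty)=(2n+5+\gamma^{LD}(G))/(6n+16)$. The map $t\mapsto (2n+5+t)/(6n+16)$ is strictly increasing, so $\gamma^{LD}(G)\le k$ if and only if $d^*(G^\infty)\le q$. For the optimisation version the same formula lets us recover $\gamma^{LD}(G)$ as $(6n+16)\,d^*(G^\infty)-2n-5$.

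\textbf{Main obstacle.} The delicate point is fixing the input model: what it means to be given an infinite periodic graph, namely a finite quotient together with the $\mathbb{Z}$-action. We must ensure the instance encoding is polynomial, so that the reduction is a genuine polynomial-time many-one reduction. The substantive combinatorial content is already carried by Lemma~\ref{lemma:min-density}.
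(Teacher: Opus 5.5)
Your proposal is correct and follows essentially the same route as the paper: both map $G$ to $G^{\infty}$, use Lemma~\ref{lemma:min-density} to get $d^*(G^{\infty})=(2n+5+\gamma^{LD}(G))/(6n+16)$, and set the threshold to $(2n+5+K)/(6n+16)$. The differences are minor: the paper also explains, via Lemma~\ref{lemma:join-graphs}, how to recover a minimum \lds{} of $G$ from a periodic optimum, whereas you make explicit the input model and the restriction to connected graphs with $n\ge 5$, which the paper leaves implicit (for connectivity, rely on citing hardness for connected graphs, since additivity over components alone does not give a many-one reduction).
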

\begin{proof}
    We present a polynomial-time reduction from the {\MinCard} 
    \lds problem in a finite graph $G$ to the {\MinDen} \lds
    problem in the infinite $\mathbb{Z}$-periodic graph $G^{\infty}$.

    To this end, let $G = (V, E)$ be a connected finite graph
    with~$n\geq 5$ vertices (given as an instance of the first problem).
    Given $G$, we choose an arbitrary vertex $z$ in $G$, and define the
    graph $G^{\infty}$ as in Definition~\ref{def:inf_graph}.  Since
    $G^{\infty}$ is a $\mathbb{Z}$-periodic graph with a finite period
    that is a subgraph isomorphic to the gadget $H(G)$, it is immediate
    that the description of $G^{\infty}$ takes time polynomial in the
    size of $G$ (as it suffices to describe one subgraph $H(G_i)$ and
    its connection to $H(G_{i+1})$).  By Lemma~\ref{lemma:min-density}, we have that 
    %
    $$ d^*(G^{\infty}) =  (2n + 5 + \gamma^{LD}(G))/ (6n + 16).$$
    
    By Lemma~\ref{lemma:upper-bound}, we know that $G^{\infty}$ has an
    \lds{} $S$ such that $|S\cap V(H_i)| = 2n+ 5 + \gamma^{LD}(G)$ for
    each $i\in \mathbb{Z}$.  Thus, an optimal solution for
    $G^{\infty}$ can be obtained from an optimal solution for $H_i$,
    for any $i\in \mathbb{Z}$. Hence, an optimal solution for
    $G^{\infty}$ also has a description that is polynomial in the
    size of $G$. So, for some $i\in \mathbb{Z}$, let $S_i$ be an
    optimal solution for $H_i$.  By Lemma~\ref{lemma:join-graphs},
    if $y^i\notin S_i$, then $|S_i\cap V(G_i)| = \gamma^{LD}(G_i)$,
    thus, $S_i\cap V(G_i)$ defines a minimum \lds for $G$. If
    $y^i\in S_i$, then by this same lemma, if we take
    $Z_i = (S_i \setminus \{y_i\}) \cup \{z_i\}$, then $Z_i$ is a minimum
    \lds of $H_i$ and $Z_i\cap V(G_i)$ defines a minimum \lds{} of
    $G_i$. Hence, $Z_i$ defines a minimum \lds of $G$.
    
    Note that the value of $\gamma^{LD}(G)$ can also be obtained
    from the equality above, as
    $\gamma^{LD}(G) = d^*(G^{\infty}) \cdot (6n + 16) - 2n - 5$.  We
    may say that $G$ has an {\lds} of cardinality at most $K$, if
    and only if $G^{\infty}$ has an {\lds} with density at most
    $\widehat{K} = (2n+5 + K)/(6n+16)$.

    As $G^{\infty}$ is an infinite $\mathbb{Z}$-periodic graph with
    a finite period, the proof of the theorem is complete.
\end{proof}

In 2015, Foucaud~\cite{Foucaud15} proved that the {\MinCard} \lds
problem is NP-hard even on subcubic planar bipartite graphs. As a
consequence, we obtain the following stronger result.
  
\begin{corollary}
    The {\MinDen} {\lds} problem is {\NP}-hard in 
    infinite $\mathbb{Z}$-periodic planar bipartite graphs with maximum
    degree~$5$ and with a finite period. 
\end{corollary}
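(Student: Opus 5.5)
The plan is to compose the reduction of Theorem~\ref{theo:NP-hardness} with Foucaud's result~\cite{Foucaud15}. That result says \MinCard{} \lds is \NP-hard on subcubic planar bipartite graphs. We restrict the input $G$ to that class and show the constructed graph $G^{\infty}$ stays planar, bipartite and of maximum degree at most~$5$. Two preliminary points need checking. First, Foucaud's hard instances may be taken connected with $n\geq 5$: a minimum \lds of a disjoint union is the union of minimum \lds's of the components, and instances of bounded size are solved by brute force. Second, the choice of $z$ is free, which we will exploit below.

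\textbf{Bipartiteness.} Every cycle of $G^{\infty}$ lies inside a single gadget $H_i$, because the gadgets are joined in a chain by single bridges $v^i_{5n}b^{i+1}$. Inside $H_i$, the path $P$, the pendant edges and the edge $xy$ lie on no cycle. Hence every cycle lies either in $G_i$ or in the Butterfly. The Butterfly cycles are $a,s,b,t,x,r$ and $c,v,d,w,x,u$, both of length~$6$. So $G^{\infty}$ is bipartite whenever $G$ is.

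\textbf{Planarity.} The block structure is a chain of planar pieces glued at cut vertices or along bridges. These pieces are $G_i$, the two hexagons sharing $x$, and trees. Hence we can place the gadgets side by side in the plane, drawing each $G_i$ with $z$ on its outer face, and add the bridges between consecutive gadgets without crossings. This gives a planar embedding of $G^{\infty}$, and the embedding can even be chosen periodic. Since $G^{\infty}$ is a $\mathbb{Z}$-periodic graph with finite period, this is enough for planarity; no compactness argument is required.

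\textbf{Degree bound.} We check degrees vertex by vertex.
\begin{itemize}
\item Vertices of $G_i$ other than $z$ keep degree at most~$3$.
\item The vertex $z$ gains the neighbour $y$, so it has degree at most $4$.
\item The vertex $y$ has degree~$2$.
\item The vertex $x$ has neighbours $r,t,u,w,y$, so it has degree~$5$.
\item The vertex $b^{i+1}$ has neighbours $\smallhat{b}, s, t$ and $v^i_{5n}$, so it has degree~$4$.
\item The vertex $d$ has neighbours $\smallhat{d}, v, w$ and $v_1$, so it has degree~$4$.
\item Path vertices have degree at most~$2$.
\end{itemize}
Thus the maximum degree is~$5$, attained at $x$. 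If we want $z$ to have degree at most~$3$ in $G^{\infty}$, we can choose $z$ to be a vertex of degree at most~$2$ in $G$, if one exists; this is not needed for the bound of~$5$.

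The main obstacle I expect is the hypothesis match. The paper's lemmas are stated for connected $G$ with $n\geq 5$, so we must make sure the restricted hard class of Foucaud still satisfies these hypotheses, or can be reduced to it. If the hard instances are not known to be connected, the fallback is the componentwise argument above. Once this is settled, the corollary follows by applying Theorem~\ref{theo:NP-hardness} verbatim.
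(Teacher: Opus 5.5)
Your proposal is correct and takes the same route as the paper: compose the reduction of Theorem~\ref{theo:NP-hardness} with Foucaud's NP-hardness result for subcubic planar bipartite graphs, a step the paper states as an immediate consequence without verification. Your checks supply the details the paper leaves implicit: every cycle lies in $G_i$ or in one of the two Butterfly $6$-cycles; there is a periodic planar drawing with $z$ on the outer face; the maximum degree is $5$, attained at $x$; and hard instances can be taken connected with $n\geq 5$.
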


\section{Concluding remarks}

We believe that the approach presented here for the \lds problem could
be useful for proving NP-hardness results for similar problems on
infinite periodic graphs.

For an updated and comprehensive bibliography on locating-dominating
sets and related topics, we refer the reader to the collection
maintained by Jean~\cite{Jean24}, which builds upon the long-standing
work of Lobstein.

\bibliographystyle{elsarticle-num}
\bibliography{lds-hardness}

\end{document}